\newcommand{\ds}{\displaystyle}
\newcommand{\ts}{\textstyle}
\newcommand{\mc}{\mathcal}
\newcommand{\qbinom}[2]{\genfrac{[}{]}{0pt}{}{#1}{#2}}
\newcommand{\eps}{\varepsilon}
\newcommand{\wh}{\widehat}
\newcommand{\BAR}[1]{\overline{#1}}
\newcommand{\probabilityfont}[1]{\mathsf{#1}}
\let\Pr\undefined
\DeclareMathOperator*{\Pr}{\probabilityfont{Pr}}
\newcommand{\defeq}{\stackrel{\text{\tiny{def}}}{=}}
\newcommand{\fieldfont}[1]{\mathbb{#1}}
\newcommand{\R}{\fieldfont{R}}
\newcommand{\Q}{\fieldfont{Q}}
\newcommand{\F}{\fieldfont{F}}
\renewcommand{\ge}{\geqslant}
\renewcommand{\le}{\leqslant}
\newtheoremstyle{theorem-style}
  {}
  {}
  {\slshape}
  {}
  {\scshape}
  {.}
  {.5em}
  {}
\newtheorem{thm}{Theorem}
\newtheorem*{thm*}{Theorem}
\newtheorem{la}[thm]{Lemma}
\newtheorem{cor}[thm]{Corollary}
\theoremstyle{definition}
\newtheorem{df}[thm]{Definition}
\renewcommand{\defeq}{\vcentcolon=}
\def\[#1\]{\begin{align*}#1\end{align*}}
\renewcommand{\qbinom}[2]{{{\genfrac{[}{]}{0pt}{}{#1}{\smash{#2}}}_q}}
\renewcommand{\L}[1]{\mc L_q(#1)}
\renewcommand{\P}[1]{\mc P(#1)}
\renewcommand{\Q}{Q}
\renewcommand{\partial}{\triangle}
\theoremstyle{theorem}
\newtheorem*{mainthm*}{Theorem \ref{thm:density} (restated)}{}
\begin{document}

\title{Thresholds in the Lattice of Subspaces of $(\F_q)^n$}
\author{Benjamin Rossman\\ University of Toronto}
\date{\today}
\maketitle{}

\begin{abstract}
Let $\Q$ be an ideal (downward-closed set) in the lattice of linear subspaces of $(\F_q)^n$, ordered by inclusion. For $0 \le k \le n$, let $\mu_k(\Q)$ denote the fraction of $k$-dimensional subspaces that belong to $\Q$. We show that these densities satisfy
\[
  \mu_k(\Q) = \frac{1}{1+z}
  \quad \Longrightarrow\quad
  \mu_{k+1}(\Q) \le \frac{1}{1+qz}.
\]
This implies a sharp threshold theorem: if $\mu_k(\Q) \le 1-\eps$, then $\mu_\ell(\Q) \le \eps$ for $\ell = k + O(\log_q(1/\eps))$.
\end{abstract}

\section{Introduction}

Let $\L{n}$ be the lattice of linear subspaces of $(\F_q)^n$, ordered by inclusion.
Let $\Q$ be a nontrivial ideal in $\L{n}$ (that is, a nonempty proper subset of $\L{n}$ such that $A \in \Q$ implies $B \in \Q$ for all $B \subset A$). For $0 \le k \le n$, let $\mu_k(\Q)$ denote the fraction of $k$-dimensional subspaces that belong to $\Q$. Densities $\mu_k(\Q)$ are known to be non-increasing: thus,
\[
  1 = \mu_0(\Q) \ge
  \cdots \ge \mu_{t-1}(\Q) \ge 1/2 > \mu_t(\Q) \ge \cdots 
  \ge \mu_n(\Q) = 0
\]
for a unique $t$.
This paper addresses the question: How quickly must $\mu_k(\Q)$ transition from $1-o(1)$ to $o(1)$?

It follows from known results (described in \S\ref{sec:KK-BT}) that
\[
  \mu_{\lfloor (t-1)/c \rfloor}(\Q) \ge 2^{-1/c}
  \quad\text{ and }\quad
  \mu_{\lceil ct \rceil}(\Q) \le 2^{-c}
\]
for all $c \ge 1$. This is the $q$-analog of the Bollob\'as-Thomason Theorem \cite{bollobas1987threshold}, which speaks of ideals in the boolean lattice $\P{n}$ of subsets of $\{1,\dots,n\}$. 

On the one hand, $\L{n}$ is the $q$-analog of $\P{n}$; 
on the other hand, it is a sub-lattice of $\P{q^n}$. 
This raises the question: Do $k$-subspace densities of ideals in $\L{n}$ scale like $k$-subset densities in $\P{n}$ or like $q^k$-subset densities in $\P{q^n}$?
Quantitatively, the latter suggests we should expect that
\[
  \mu_{t-1-c}(\Q) \ge 1 - q^{-c}
  \quad\text{ and }\quad
  \mu_{t+c}(\Q) \le q^{-c}.
\]
for all integers $c \ge 1$. 
This is precisely what we show.

Our main result actually concerns shadows in the subspace lattice. Let $\L{n,k}$ denote the set of $k$-dimensional subspaces of $(\F_q)^n$. For $1 \le k \le n$ and $S \subseteq \L{n,k}$, the {\em shadow} of $S$ is the set $\partial S \subseteq \L{n,k-1}$ defined by
$
  \partial S \defeq \{B \in L_{n,k-1} : \exists A \in S,\ A \subset B\}$.
We show:
\begin{thm}\label{thm:density}
For all $1 \le k \le n$ and $S \subseteq \L{n,k}$, if $\mu_k(S) = (1+z)^{-1}$ where $z \in \R_{\ge 0}$, then
\[
  \mu_{k-1}(\partial S) \ge \left(1+
  \frac{q(q^{k-1}-1)(q^{n-k}-1)}
          {(q^k-1)(q^{n-k+1}-1)}
  \cdot z \right)^{-1} 
  \ge \left(1+\frac{z}{q}\right)^{-1}.
\]
\end{thm}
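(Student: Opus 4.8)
\medskip
\noindent\textbf{Proof proposal.}
The plan is to prove the first (quantitative) bound by a short second-moment argument built on the spectral gap of the Grassmann graph, and then to read off the coarser bound $(1+z/q)^{-1}$ from the elementary fact that the displayed coefficient of $z$ is always at most $\tfrac1q$ (after clearing denominators this amounts to $q^k+q^{n-k+1}\ge q+1$). Write $s:=\mu_k(S)=(1+z)^{-1}$, so that $z=s^{-1}-1$ and $(1+cz)^{-1}=s/\bigl(c+(1-c)s\bigr)$. The cases $k\in\{1,n\}$ are degenerate --- there $c=0$, and $\partial S=\L{n,k-1}$ as soon as $S\ne\emptyset$ --- so assume $2\le k\le n-1$. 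For $B\in\L{n,k-1}$ set $d_S(B):=|\{A\in S:B\subset A\}|$, so $\partial S=\{B:d_S(B)\ge1\}$, and counting incident pairs gives $\sum_{B}d_S(B)=|S|\,\qbinom{k}{1}$.

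First I would apply Cauchy--Schwarz over the support $\partial S$:
\[
  |\partial S|\ \ge\ \frac{\bigl(\sum_B d_S(B)\bigr)^2}{\sum_B d_S(B)^2}\ =\ \frac{\bigl(|S|\,\qbinom{k}{1}\bigr)^{2}}{\sum_B d_S(B)^2}.
\]
Then I would evaluate the denominator by counting, for each ordered pair $(A,A')\in S^2$, the $(k-1)$-dimensional subspaces of $A\cap A'$: this number is $\qbinom{k}{1}$ when $A=A'$, it is $1$ when $\dim(A\cap A')=k-1$, and it is $0$ otherwise, so $\sum_B d_S(B)^2=|S|\,\qbinom{k}{1}+2e(S)$, where $e(S)$ is the number of edges induced by $S$ in the Grassmann graph $J_q(n,k)$ (vertices: the $k$-dimensional subspaces; $A\sim A'$ iff $\dim(A\cap A')=k-1$). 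The problem is thus reduced to an upper bound on $e(S)$.

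For that I would invoke the classical spectrum of $J_q(n,k)$, whose eigenvalues are $q^{\,j+1}\qbinom{k-j}{1}\qbinom{n-k-j}{1}-\qbinom{j}{1}$ for $0\le j\le\min(k,n-k)$; in particular its valency is $D=q\,\qbinom{k}{1}\qbinom{n-k}{1}$ and its second largest eigenvalue is $\theta=q^{2}\qbinom{k-1}{1}\qbinom{n-k-1}{1}-1$. The standard Hoffman/expander-mixing argument --- expand $\mb{1}_S$ in an orthonormal eigenbasis of the adjacency matrix and bound every eigenvalue other than the valency by $\theta$ --- then gives
\[
  2e(S)\ \le\ D\,\frac{|S|^2}{|\L{n,k}|}+\theta\Bigl(|S|-\frac{|S|^2}{|\L{n,k}|}\Bigr)\ =\ D\,s\,|S|+\theta\,(1-s)\,|S|.
\]

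Assembling the three steps and using the standard identity $|\L{n,k}|/|\L{n,k-1}|=\qbinom{n-k+1}{1}/\qbinom{k}{1}$, the target inequality $\mu_{k-1}(\partial S)\ge s/\bigl(c+(1-c)s\bigr)$ becomes equivalent to
\[
  \qbinom{k}{1}\qbinom{n-k+1}{1}\bigl(c+(1-c)s\bigr)\ \ge\ \qbinom{k}{1}+D\,s+\theta\,(1-s),
\]
whose two sides are affine in $s$; it is therefore enough to verify it at $s=1$, where it reads $\qbinom{k}{1}\qbinom{n-k+1}{1}=\qbinom{k}{1}+D$, and at $s=0$, where it reads $c\,\qbinom{k}{1}\qbinom{n-k+1}{1}=\qbinom{k}{1}+\theta$; both are routine $q$-binomial identities, and in fact both hold with equality, so the whole chain is sharp. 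I expect the real obstacle to lie not in any single step but in recognising that Cauchy--Schwarz, the pair-counting identity, and the Grassmann spectral gap combine to produce \emph{exactly} the constant in the theorem; a reassuring check is that every inequality above is tight for $S=\{A\in\L{n,k}:A\subseteq H\}$ with $H$ a fixed hyperplane, which is the extremal configuration. Finally, $c=\frac{q(q^{k-1}-1)(q^{n-k}-1)}{(q^k-1)(q^{n-k+1}-1)}\le\frac1q$ is elementary, and this yields the second inequality.
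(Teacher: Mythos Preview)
Your proof is correct and is essentially the paper's argument: both combine Cauchy--Schwarz (your bound $|\partial S|\ge(\sum d_S(B))^2/\sum d_S(B)^2$ is exactly the paper's Lemma~\ref{la:upper} rewritten via $2e(S)=D|S|-|E(S,\bar S)|$) with the spectral gap of the Grassmann graph (your Hoffman bound on $2e(S)$ is the paper's Lemma~\ref{la:lower} rewritten the same way). The one small difference is that you use the one-sided bound with $\theta=\lambda_2$ rather than the two-sided Expander Mixing Lemma with $\max_{i\ge2}|\lambda_i|$; since $\lambda_2=-1$ when $k=n-1$, your version handles that case uniformly, whereas the paper treats $k=n-1$ separately by duality.
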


The first inequality in Theorem \ref{thm:density} is tight in two cases:
\begin{itemize}
\item
when $S$ is the set of $k$-dimensional subspaces of a fixed $n{-}1$-dimensional space ($z = \frac{q^n-q^{n-k}}{q^{n-k}-1}$), as well as
\item
when $S$ is the set of $k$-dimensional subspaces not containing a fixed $1$-dimensional space ($z = \smash{\frac{q^k-1}{q^n-q^k}}$).
\end{itemize}
For values of $z$ between $\frac{q^k-1}{q^n-q^k}$ and $\frac{q^n-q^{n-k}}{q^{n-k}-1}$, Theorem \ref{thm:main} improves the lower bound on $\mu_{k-1}(\partial S)$ given by a $q$-analog of the Kruskal-Katona Theorem due to Chowdhury and Patk\'os \cite{chowdhury2010shadows}. 

A sharp threshold theorem for $\L{n}$ follows immediately from Theorem \ref{thm:density} and the observation that $\partial(Q \cap \L{n,k}) \subseteq Q \cap \L{n,k-1}$ for ideals $Q$.

\begin{thm}\label{thm:main}
For every ideal $\Q$ in $\L{n}$ and $1 \le k \le n-1$, if $\mu_k(\Q) = (1+z)^{-1}$, then $\mu_{k-1}(\Q) \ge (1+(z/q))^{-1}$ and $\mu_{k+1}(\Q) \le (1+qz)^{-1}$.
As a consequence, if $\mu_k(\Q) \le 1-\eps$, then $\mu_\ell(\Q) \le \eps$ for $\ell = k + O(\log_q(1/\eps))$.
\end{thm}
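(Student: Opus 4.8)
The plan is to derive Theorem~\ref{thm:main} directly from Theorem~\ref{thm:density}, using the shadow containment for ideals, and then to iterate the resulting one-step density estimate to obtain the threshold statement. No combinatorial input beyond Theorem~\ref{thm:density} should be required; everything else is bookkeeping.

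\textbf{The two one-step inequalities.} Fix an ideal $\Q$ and a dimension $j$ with $1 \le j \le n$, and put $S \defeq \Q \cap \L{n,j}$, so that $\mu_j(S) = \mu_j(\Q)$. If $B \in \partial S$, then $B \subset A$ for some $A \in S$; since $A \in \Q$ and $\Q$ is downward closed, $B \in \Q$, and as $\dim B = j-1$ this gives $\partial S \subseteq \Q \cap \L{n,j-1}$, whence $\mu_{j-1}(\Q) \ge \mu_{j-1}(\partial S)$. Therefore, if $\mu_j(\Q) = (1+z)^{-1}$, Theorem~\ref{thm:density} yields $\mu_{j-1}(\Q) \ge (1+(z/q))^{-1}$. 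Taking $j = k$ (valid since $1 \le k \le n-1 \le n$) is exactly the first assertion of Theorem~\ref{thm:main}. For the second, apply the same statement with $j = k+1$ (valid since $1 \le k+1 \le n$): writing $\mu_{k+1}(\Q) = (1+w)^{-1}$, which is harmless as the claim is trivial when $\mu_{k+1}(\Q)=0$, we get $(1+z)^{-1} = \mu_k(\Q) \ge (1+(w/q))^{-1}$, hence $w \ge qz$, hence $\mu_{k+1}(\Q) = (1+w)^{-1} \le (1+qz)^{-1}$.

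\textbf{Iteration and the threshold.} For each $j$ with $\mu_j(\Q) > 0$ write $\mu_j(\Q) = (1+z_j)^{-1}$, $z_j \ge 0$ (and set $z_j = \infty$ when $\mu_j(\Q) = 0$). The upward inequality just proved, applied at each dimension, says $z_{j+1} \ge q\,z_j$ for all $1 \le j \le n-1$. Now suppose $\mu_k(\Q) \le 1-\eps$ for some $\eps \in (0,1/2]$ (the case $\eps > 1/2$ reduces to $\eps = 1/2$, and the case $\Q = \varnothing$ is vacuous). Then $k \ge 1$, since $\mu_0(\Q) = 1$ for any nonempty ideal, and $(1+z_k)^{-1} \le 1-\eps$ forces $z_k \ge \eps/(1-\eps) \ge \eps$. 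Iterating, $z_{k+m} \ge q^m z_k \ge q^m\eps$ for every $m$ with $k+m \le n$, so $\mu_{k+m}(\Q) \le (1+q^m\eps)^{-1} \le \eps$ as soon as $q^m\eps \ge (1-\eps)/\eps$, which certainly holds once $q^m \ge 1/\eps^2$, i.e.\ $m \ge 2\log_q(1/\eps)$. Hence $\ell \defeq \min\{n,\ k + \lceil 2\log_q(1/\eps)\rceil\}$ works: if the minimum is the second term, this is the computation just made; if it is $n$, then $\mu_n(\Q) \le \mu_k(\Q) < 1$ together with $\mu_n(\Q) \in \{0,1\}$ forces $\mu_n(\Q) = 0 \le \eps$. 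In all cases $\ell = k + O(\log_q(1/\eps))$.

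The only delicate points are the bookkeeping ones: keeping each invocation of Theorem~\ref{thm:density} inside the legal dimension range, and treating the degenerate levels where a density equals $0$ or $1$ (equivalently $j$ near $0$ or $n$, and trivial ideals). There is essentially no obstacle at this level; the whole difficulty of the paper is packaged inside Theorem~\ref{thm:density}, which this argument uses as a black box. Were one forced to prove that theorem instead, the crux would be the sharp shadow density estimate in the subspace lattice --- presumably a weighted/normalized double-counting of the containment relation between $j$- and $(j{-}1)$-dimensional subspaces, pushed below the $q$-Kruskal--Katona bound of Chowdhury and Patk\'os.
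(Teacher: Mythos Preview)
Your proposal is correct and follows exactly the route the paper indicates: the paper does not give a detailed proof of Theorem~\ref{thm:main} but simply states that it ``follows immediately from Theorem~\ref{thm:density} and the observation that $\partial(Q \cap \L{n,k}) \subseteq Q \cap \L{n,k-1}$ for ideals $Q$.'' Your write-up fills in precisely these details --- the shadow containment, the contrapositive step for the upper bound on $\mu_{k+1}$, and the iteration to get the $O(\log_q(1/\eps))$ threshold width --- and handles the boundary cases cleanly.
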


The rest of the paper is organized as follows. In \S\ref{sec:KK-BT} we describe the previous $q$-analogs of the Kruskal-Katona and Bollob\'as-Thomason Theorems and their dual versions. In \S\ref{sec:main} we prove Theorem \ref{thm:density} using well-known tools (the Expander Mixing Lemma and bounds on the eigenvalues of Grassmann graphs). In \S\ref{sec:tightness} we discuss the tightness of the results. Finally, in \S\ref{sec:query} we give an application of Theorem \ref{thm:main} to a problem in query complexity.

\section{$q$-Analogs of Kruskal-Katona and Bollob\'as-Thomason}\label{sec:KK-BT}

For $x \in \R_{\ge 0}$, let $[x]_q \defeq \frac{q^x-1}{q-1}$. The (Gaussian) $q$-binomial coefficient $\qbinom{x}{k}$ is defined by
$$
  \qbinom{x}{k}
  \defeq 
  \prod_{i = 0}^{k-1} \frac{[x-i]_q}{[k-i]_q}.
$$
Note that $[0]_q=0$ and $[1]_q=1$ and $|\L{n,k}| = \qbinom{n}{k} = \qbinom{n}{n-k}$ for integers $n \ge k$.

Chowdhury and Patk\'os \cite{chowdhury2010shadows} proved a $q$-analog the Kruskal-Katona Theorem \cite{katona2009theorem,kruskal1963number}, specifically a version due Keevash \cite{keevash2008shadows}. (See \cite{wang2011intersecting} for an alternative proof.)

\begin{thm}[$q$-Kruskal-Katona]\label{thm:qKK}
For all $1 \le k \le n$ and $S \subseteq \L{n,k}$, if $|S| = \qbinom{x}{k}$, then $|\partial S| \ge \qbinom{x}{k-1}$.
Moreover, this bound is tight when $S$ is the set of $k$-dimensional subspaces of a fixed $\ell$-dimensional space where $k \le \ell \le n$.
\end{thm}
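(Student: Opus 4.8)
\emph{Proof strategy for Theorem~\ref{thm:qKK}.} Kruskal--Katona is a ``worst case over all sizes'' statement whose bite is for small-to-moderate families --- already for a single $k$-space it yields the tight bound $|\partial S| \ge \qbinom{k}{k-1} = [k]_q$ --- precisely the regime in which the spectral/ratio estimates used for Theorem~\ref{thm:density} are weakest, so a genuinely combinatorial argument is needed. I would prove it by the classical \emph{compression} route, transported to $\L{n}$ as by Chowdhury--Patk\'os. Fix a complete flag $0 = F_0 \subset F_1 \subset \dots \subset F_n = (\F_q)^n$ and use it to define a ``$q$-colexicographic'' linear order $\prec$ on $\L{n,k}$ whose initial segments include every flag family $\{A \in \L{n,k} : A \subseteq F_\ell\}$, so that the claimed extremizers are themselves initial segments. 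The proof then splits into (i) a reduction showing that, among all $S \subseteq \L{n,k}$ of a fixed size, $|\partial S|$ is minimized by an initial segment of $\prec$, and (ii) a computation verifying the bound for initial segments.

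For (i) I would introduce subspace-compression operators attached to the ``windows'' of the flag (suitable pairs $F_i \subset F_j$, e.g.\ the two-dimensional steps $F_{j-2} \subset F_j$): given $A \in S$, apply a fixed linear automorphism that is the identity off the window and sends $A$ to a $\prec$-smaller $k$-space $A'$, carrying out the substitution only when $A' \notin S$. One then checks that each such operator preserves $|S|$, never increases $|\partial S|$, and --- iterated until no move applies --- reduces the problem to the case in which $S$ is an initial segment of $\prec$. I expect the shadow-monotonicity of a single compression to be the main obstacle: a linear subspace carries no intrinsic coordinates to permute, so one cannot simply copy the set-theoretic move, and instead must work with bases adapted to the flag, sort $k$-spaces into ``types'' according to how they and their $(k-1)$-subspaces meet the window, and verify the inequality type by type. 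This is the technical core of \cite{chowdhury2010shadows}, and the point at which the alternative proof of \cite{wang2011intersecting} takes a different path.

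For (ii), with $S$ now an initial segment of $\prec$, write $|S|$ in $q$-cascade form $|S| = \qbinom{a_k}{k} + \qbinom{a_{k-1}}{k-1} + \dots + \qbinom{a_s}{s}$ with $a_k > a_{k-1} > \dots > a_s \ge s \ge 1$; then the initial segment of $\prec$ of that size decomposes into the $q$-analogues of the usual colex layers (the first being all $k$-subspaces of $F_{a_k}$, the second a family of $\qbinom{a_{k-1}}{k-1}$ further $k$-spaces, and so on), and one checks that passing from each layer to its shadow contributes exactly $\qbinom{a_k}{k-1}, \qbinom{a_{k-1}}{k-2}, \dots, \qbinom{a_s}{s-1}$ new $(k-1)$-subspaces respectively, so that $|\partial S| = \sum_i \qbinom{a_i}{i-1}$. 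It then remains to show that $\sum_i \qbinom{a_i}{i} = \qbinom{x}{k}$ forces $\sum_i \qbinom{a_i}{i-1} \ge \qbinom{x}{k-1}$ --- an elementary (if slightly fussy) convexity inequality for Gaussian binomial coefficients, the $q$-analogue of the one that in the Boolean case yields Lov\'asz's real-variable form of Kruskal--Katona and that Keevash's proof streamlines. Finally, tightness is immediate: for $S = \{A \in \L{n,k} : A \subseteq F_\ell\}$ one has $|S| = \qbinom{\ell}{k}$ and $\partial S = \{B \in \L{n,k-1} : B \subseteq F_\ell\}$, whence $|\partial S| = \qbinom{\ell}{k-1}$, matching the bound at $x = \ell$.
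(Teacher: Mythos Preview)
The paper does not prove Theorem~\ref{thm:qKK}; it is quoted as a known result of Chowdhury and Patk\'os \cite{chowdhury2010shadows} (with an alternative proof cited as \cite{wang2011intersecting}), so there is no in-paper argument to compare your proposal against.

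That said, your proposed route has a genuine obstruction. Your step~(i) asserts that for \emph{every} cardinality, $|\partial S|$ is minimized by the initial segment of a single fixed linear order $\prec$ on $\L{n,k}$. If that were true, these initial segments would constitute a nested family of shadow-minimizers. But the paper itself records in \S\ref{sec:tightness} (citing \cite{bezrukov1999kruskal,harper1994isoperimetric,ure1996study}) that nested solutions to the shadow minimization problem in $\L{n}$ are known \emph{not} to exist. Hence no system of shadow-non-increasing compressions can drive an arbitrary $S$ down to an initial segment of one fixed total order, and step~(i) is false as stated. Correspondingly, steps~(i) and~(ii) together would yield the full cascade $q$-Kruskal--Katona with colex-type extremizers --- precisely what the nonexistence of nested solutions rules out. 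This is exactly the place where the subspace lattice diverges from $\P{n}$, and it is why Chowdhury--Patk\'os (following Keevash's Boolean argument) target only the real-$x$ Lov\'asz form stated here rather than passing through a global ``initial segments are extremal'' reduction. Your verification of the tightness clause at integer $x=\ell$ is fine and is the easy direction.
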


Note that the parameter $n$ (the dimension of the ambient vector space) plays no role in this bound, in contrast to Theorem \ref{thm:density}. It turns out Theorem \ref{thm:qKK} is slack when $n-1 < x < n$; this is precisely where Theorem \ref{thm:density} gives an improvement (as we discuss in \S\ref{sec:tightness}).

Combining Theorem \ref{thm:qKK} with 
the inequality 
$({\qbinom{x}{k-1}}/{\qbinom{n}{k-1}})^k \ge ({\qbinom{x}{k}}/{\qbinom{n}{k}})^{k-1}$
for all $k \le x \le n$, we have the following $q$-analog of the Bollob\'as-Thomason Theorem \cite{bollobas1987threshold} for the boolean lattice $\P{n}$.

\begin{thm}[$q$-Bollob\'as-Thomason]\label{thm:qBT}
For every ideal $\Q$ in $\L{n}$, 
\[\mu_1(\Q) \ge \mu_2(\Q)^{1/2} \ge \mu_3(\Q)^{1/3} \ge \cdots \ge \mu_n(\Q)^{1/n}.\]
In particular, if $\mu_{t-1}(\Q) \ge 1/2 > \mu_t(\Q)$, then
$\mu_{\lfloor (t-1)/c \rfloor}(\Q) \ge 2^{-1/c}$ and
$\mu_{\lceil ct \rceil}(\Q) \le 2^{-c}$ for all $c \ge 1$.
\end{thm}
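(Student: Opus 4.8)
The plan is to derive the chain of inequalities directly from the $q$-Kruskal-Katona Theorem (Theorem~\ref{thm:qKK}) together with the $q$-binomial ratio inequality quoted just before the statement, and then to read off the two ``in particular'' bounds by elementary estimates on real powers.

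First I would fix an index $2 \le k \le n$ and set $S \defeq \Q \cap \L{n,k}$. If $S = \emp$ then $\mu_k(\Q) = 0$ and $\mu_{k-1}(\Q)^{1/(k-1)} \ge 0 = \mu_k(\Q)^{1/k}$ holds trivially, so I may assume $S \ne \emp$. Since $x \mapsto \qbinom{x}{k} = \prod_{i=0}^{k-1}(q^{x-i}-1)/(q^{k-i}-1)$ is continuous and strictly increasing on $[k,n]$ with endpoint values $1$ and $\qbinom{n}{k}$, and $1 \le |S| \le \qbinom{n}{k}$, there is a unique $x \in [k,n]$ with $|S| = \qbinom{x}{k}$. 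Because $\Q$ is downward closed, every $(k{-}1)$-dimensional subspace contained in a member of $S$ again lies in $\Q$; hence $\partial S \subseteq \Q \cap \L{n,k-1}$, and Theorem~\ref{thm:qKK} gives $|\Q \cap \L{n,k-1}| \ge |\partial S| \ge \qbinom{x}{k-1}$. Dividing by $\qbinom{n}{k-1}$ and $\qbinom{n}{k}$ respectively, this says $\mu_{k-1}(\Q) \ge \qbinom{x}{k-1}/\qbinom{n}{k-1}$ while $\mu_k(\Q) = \qbinom{x}{k}/\qbinom{n}{k}$. Feeding this into the quoted inequality $\bigl(\qbinom{x}{k-1}/\qbinom{n}{k-1}\bigr)^{k} \ge \bigl(\qbinom{x}{k}/\qbinom{n}{k}\bigr)^{k-1}$ (valid for $k \le x \le n$) yields $\mu_{k-1}(\Q)^{k} \ge \mu_k(\Q)^{k-1}$, i.e.\ $\mu_{k-1}(\Q)^{1/(k-1)} \ge \mu_k(\Q)^{1/k}$ after taking $k(k{-}1)$-th roots; letting $k$ range over $2,\dots,n$ produces the chain.

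For completeness I would also record a short proof of the quoted ratio inequality (which, being stated in the excerpt, may alternatively be cited). Using the identity $\qbinom{x}{k}=\qbinom{x}{k-1}\cdot[x-k+1]_q/[k]_q$, the inequality reduces to $\prod_{i=0}^{k-2}\bigl([x-i]_q/[n-i]_q\bigr) \ge \bigl([x-k+1]_q/[n-k+1]_q\bigr)^{k-1}$, so it is enough to check $[x-i]_q/[n-i]_q \ge [x-k+1]_q/[n-k+1]_q$ for each $0 \le i \le k-2$. Both sides have the form $g(t) \defeq [t]_q/[t+d]_q$ with $d \defeq n-x \ge 0$, evaluated at $t = x-i$ and $t = x-k+1$; substituting $u = q^{t} \ge 1$ gives $g = (u-1)/(q^{d}u-1)$, whose derivative in $u$ has numerator $q^{d}-1 \ge 0$, so $g$ is non-decreasing, and $x-i \ge x-k+1$ finishes the check.

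Finally, the threshold bounds follow by raising the chain to suitable powers: for $1 \le j \le m$ one has $\mu_j(\Q) \ge \mu_m(\Q)^{j/m}$ and $\mu_m(\Q) \le \mu_j(\Q)^{m/j}$. Taking $m = t-1$ and $j = \lfloor(t-1)/c\rfloor$ (if $t=1$ or $j=0$ then $\mu_0(\Q)=1\ge 2^{-1/c}$ and there is nothing to prove), one has $j/(t-1) \le 1/c$, so $\tfrac12 \le \mu_{t-1}(\Q) \le 1$ gives $\mu_{\lfloor(t-1)/c\rfloor}(\Q) \ge \mu_{t-1}(\Q)^{j/(t-1)} \ge (\tfrac12)^{1/c} = 2^{-1/c}$. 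Taking $j = t$ and $m = \lceil ct\rceil$ (so $m/t \ge c$), the bound $\mu_t(\Q) < \tfrac12$ gives $\mu_{\lceil ct\rceil}(\Q) \le \mu_t(\Q)^{m/t} < (\tfrac12)^{c} = 2^{-c}$ (immediate if $\mu_t(\Q)=0$). I do not expect a real obstacle here: the only genuinely non-trivial input, Theorem~\ref{thm:qKK}, is already available, and the remaining work is bookkeeping --- the points needing a line of care are the ideal/shadow containment $\partial(\Q\cap\L{n,k}) \subseteq \Q\cap\L{n,k-1}$, the existence of the real interpolation parameter $x \in [k,n]$, and the degenerate cases $S = \emp$, $t = 1$, and $\mu_t(\Q) = 0$.
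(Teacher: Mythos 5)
Your proposal is correct and follows exactly the route the paper indicates (the paper states Theorem~\ref{thm:qBT} as an immediate consequence of Theorem~\ref{thm:qKK} combined with the ratio inequality $(\qbinom{x}{k-1}/\qbinom{n}{k-1})^k \ge (\qbinom{x}{k}/\qbinom{n}{k})^{k-1}$, without writing out the details). You have simply filled in the bookkeeping --- the real interpolation parameter $x$, the shadow containment, a verification of the quoted ratio inequality, and the power manipulations for the threshold bounds --- all of which check out.
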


If we regard $\Q$ as a sequence of ideals in $\L{n}$, one for each $n$, then Theorem \ref{thm:qBT} implies that every nontrivial $\Q$ has a {\em threshold function} $t(n)$, meaning that $\mu_{k(n)}(\Q) = 1-o(1)$ for all $k(n) = o(t(n))$ and $\mu_{\ell(n)}(\Q) = o(1)$ for all $\ell(n) = \omega(t(n))$. In the boolean lattice $\P{n}$, nothing more can be said in general, although certain classes of ideals in $\P{n}$, such as monotone graph properties when $n = \binom{m}{2}$, are known to have {\em sharp thresholds} such that $\mu_{k(n)}(\Q) = 1 - o(1)$ and $\mu_{\ell(n)}(\Q) = o(1)$ for some $k(n) = t(n) - o(t(n))$ and $\ell(n) = t(n) + o(t(n))$ (see \cite{friedgut1996every}). 
In the same sense, Theorem \ref{thm:main} shows that every sequence of nontrivial ideals in $\L{n}$ has a sharp threshold.

\subsection{Dual versions of Theorems \ref{thm:qKK} and \ref{thm:qBT}}

For a subspace $A$ of $(\F_q)^n$, the orthogonal complement is defined by $$
  A^\perp \defeq \{b \in (\F_q)^n : \ts\sum_{i=1}^n a_ib_i = 0 \text{ for all } a \in A\}.$$
Note that $\dim(A^\perp) = n-\dim(A)$ and $(A^\perp)^\perp = A$ and $B \subseteq A \Longrightarrow A^\perp \subseteq B^\perp$.

For every ideal $Q$ in $\L{n}$, there is a dual ideal $Q^\ast \defeq \{A \in \L{n} : A^\perp \notin \Q\}$ satisfying $\mu_k(Q^\ast) = 1 - \mu_{n-k}(Q)$. 
Applying Theorem \ref{thm:qBT} to $Q^\ast$ yields:

\begin{thm}[Dual $q$-Bollob\'as-Thomason]\label{thm:dual-qBT}
For every ideal $Q$ in $\L{n}$,
\[
1-\mu_{n-1}(\Q) \ge (1-\mu_{n-2}(\Q))^{1/2} \ge (1-\mu_{n-3}(\Q))^{1/3} \ge \cdots \ge (1-\mu_0(\Q))^{1/n}.
\]
In particular,
$\mu_{\lfloor c(t-1) + (1-c)n \rfloor}(\Q) \ge 1 - 2^{-c}$ and
$\mu_{\lceil t/c + (1-1/c)n \rceil}(\Q) \le 1 - 2^{-1/c}$ 
for all $c \ge 1$. (This improves Theorem \ref{thm:qBT} when $t \ge n/2$.)
\end{thm}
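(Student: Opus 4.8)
The plan is to apply the already-established Theorem~\ref{thm:qBT} to the dual ideal $Q^\ast = \{A \in \L{n} : A^\perp \notin \Q\}$ and then translate every inequality back to $\Q$ using the identity $\mu_k(Q^\ast) = 1 - \mu_{n-k}(\Q)$ recorded just before the statement. So the only work is (i) checking that $Q^\ast$ is itself a nontrivial ideal, so that Theorem~\ref{thm:qBT} legitimately applies, (ii) substituting into the chain of inequalities, and (iii) doing the threshold bookkeeping for the ``in particular'' clause.

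For (i): downward-closedness of $Q^\ast$ follows from $B \subseteq A \Rightarrow A^\perp \subseteq B^\perp$ together with $(A^\perp)^\perp = A$ — if $A \in Q^\ast$ and $B \subseteq A$ then $B^\perp \supseteq A^\perp \notin \Q$, and since $\Q$ is downward-closed this forces $B^\perp \notin \Q$, i.e.\ $B \in Q^\ast$. Nonemptiness of $Q^\ast$ comes from properness of $\Q$ (take any $V \notin \Q$; then $V^\perp \in Q^\ast$) and properness of $Q^\ast$ from nonemptiness of $\Q$; in particular $1 \le t^\ast \le n$ for the threshold $t^\ast$ of $Q^\ast$. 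The identity $\mu_k(Q^\ast) = 1 - \mu_{n-k}(\Q)$ holds because $A \mapsto A^\perp$ is a bijection $\L{n,k} \to \L{n,n-k}$, so the $k$-dimensional members of $Q^\ast$ are in bijection with the $(n-k)$-dimensional subspaces outside $\Q$.

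For (ii) and (iii): plugging $\mu_j(Q^\ast) = 1 - \mu_{n-j}(\Q)$ into $\mu_1(Q^\ast) \ge \mu_2(Q^\ast)^{1/2} \ge \cdots \ge \mu_n(Q^\ast)^{1/n}$ and reindexing $j \leftrightarrow n-j$ reproduces the displayed chain $1-\mu_{n-1}(\Q) \ge (1-\mu_{n-2}(\Q))^{1/2} \ge \cdots \ge (1-\mu_0(\Q))^{1/n}$ verbatim. For the threshold clause, let $t$ be the threshold of $\Q$ and put $t^\ast := n+1-t$. Then $\mu_{t^\ast-1}(Q^\ast) = 1-\mu_t(\Q) \ge 1/2$ and $\mu_{t^\ast}(Q^\ast) = 1-\mu_{t-1}(\Q) \le 1/2$, so Theorem~\ref{thm:qBT} applied to $Q^\ast$ gives $\mu_{\lfloor(t^\ast-1)/c\rfloor}(Q^\ast) \ge 2^{-1/c}$ and $\mu_{\lceil ct^\ast\rceil}(Q^\ast) \le 2^{-c}$ for all $c \ge 1$; converting back via $\mu_j(Q^\ast) = 1-\mu_{n-j}(\Q)$ and using $n - \lfloor x\rfloor = \lceil n-x\rceil$ yields $\mu_{\lceil t/c + (1-1/c)n\rceil}(\Q) \le 1-2^{-1/c}$ and $\mu_{\lfloor c(t-1)+(1-c)n\rfloor}(\Q) \ge 1-2^{-c}$. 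The parenthetical comparison with Theorem~\ref{thm:qBT} when $t \ge n/2$ then follows by contrasting these index ranges with the ones Theorem~\ref{thm:qBT} produces directly.

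There is no genuine conceptual obstacle — this is a formal dualization — but the point that deserves care is the threshold bookkeeping. One should note that the derivation of the ``in particular'' part of Theorem~\ref{thm:qBT} only uses the non-strict inequalities $\mu_{t-1}(\Q) \ge 1/2$ and $\mu_t(\Q) \le 1/2$; this is precisely what lets one take $t^\ast = n+1-t$ even when some density is exactly $1/2$, since then $\mu_{t^\ast}(Q^\ast)$ may only be $\le 1/2$ rather than $< 1/2$. Beyond that, verifying the floor/ceiling identities and that $Q^\ast$ lands in the scope of Theorem~\ref{thm:qBT} is routine.
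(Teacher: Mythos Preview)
Your proposal is correct and follows exactly the paper's approach: the paper simply says ``Applying Theorem~\ref{thm:qBT} to $Q^\ast$ yields'' Theorem~\ref{thm:dual-qBT}, and you have spelled out precisely this dualization, including the verification that $Q^\ast$ is a nontrivial ideal and the floor/ceiling arithmetic for the threshold clause. Your observation about needing only the non-strict inequalities $\mu_{t^\ast-1}(Q^\ast)\ge 1/2$ and $\mu_{t^\ast}(Q^\ast)\le 1/2$ is a genuine point the paper glosses over.
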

 
Similarly, there is a dual version of Theorem \ref{thm:qKK}. It may be helpful to include the proof, since we will use a similar argument in \S\ref{sec:main}.

\begin{thm}[Dual $q$-Kruskal-Katona]\label{thm:dual-qKK}
For all $1 \le k \le n$ and $n-k+1 \le y \le n$ and $S \subseteq \L{n,k}$, if $|S| = \qbinom{n}{k}-\qbinom{y}{n-k}$, then $|\partial S| \ge \qbinom{n}{k-1}-\qbinom{y}{n-k+1}$. 
\end{thm}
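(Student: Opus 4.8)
The plan is to reduce the statement to the primal $q$-Kruskal--Katona theorem (Theorem~\ref{thm:qKK}) via the orthogonal-complement duality. The map $A \mapsto A^\perp$ is an inclusion-reversing bijection $\L{n,k} \to \L{n,n-k}$, so it converts lower shadows into upper shadows: writing $S^\perp \defeq \{A^\perp : A \in S\} \subseteq \L{n,n-k}$, a subspace $B \in \L{n,k-1}$ lies in $\partial S$ if and only if $B^\perp \in \L{n,n-k+1}$ contains some $A^\perp \in S^\perp$. Hence $B \mapsto B^\perp$ maps $\partial S$ bijectively onto the upper shadow $\nabla(S^\perp) \defeq \{C \in \L{n,n-k+1} : \exists D \in S^\perp,\ D \subset C\}$; in particular $|\partial S| = |\nabla(S^\perp)|$ and $|S^\perp| = |S|$.

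Next I would pass to complements. Set $U \defeq \L{n,n-k} \setminus S^\perp$, so $|U| = \qbinom{n}{k} - |S| = \qbinom{y}{n-k}$, and set $W \defeq \L{n,n-k+1} \setminus \nabla(S^\perp)$. Unwinding the definitions, $W$ is exactly the set of $(n-k+1)$-dimensional subspaces all of whose $(n-k)$-dimensional subspaces lie in $U$; consequently the lower shadow of $W$ satisfies $\partial W \subseteq U$, and therefore $|\partial W| \le |U| = \qbinom{y}{n-k}$. It thus suffices to prove $|W| \le \qbinom{y}{n-k+1}$, since then $|\partial S| = |\nabla(S^\perp)| = \qbinom{n}{n-k+1} - |W| \ge \qbinom{n}{k-1} - \qbinom{y}{n-k+1}$, using $\qbinom{n}{n-k+1} = \qbinom{n}{k-1}$.

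Finally I would apply Theorem~\ref{thm:qKK} to the family $W \subseteq \L{n,n-k+1}$. Writing $|W| = \qbinom{x}{n-k+1}$ for a real $x \ge n-k$ (such an $x$ exists because $t \mapsto \qbinom{t}{n-k+1}$ is continuous, nonnegative and nondecreasing on $[n-k,\infty)$, running from $0$ to $\infty$), Theorem~\ref{thm:qKK} gives $\qbinom{x}{n-k} \le |\partial W| \le \qbinom{y}{n-k}$. Since $t \mapsto \qbinom{t}{n-k}$ is increasing on $[n-k,\infty)$ and $x,y \ge n-k$, this forces $x \le y$, whence $|W| = \qbinom{x}{n-k+1} \le \qbinom{y}{n-k+1}$, as needed. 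The degenerate instances ($k \in \{1,n\}$, forcing $|S| = 0$; or $W = \emptyset$, taking $x = n-k$) are all consistent with this chain of inequalities.

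The one point requiring care — more a bookkeeping matter than a genuine obstacle — is the monotonicity and continuity of the Gaussian binomial $\qbinom{x}{m}$ in the real argument $x$ on $[m-1,\infty)$, which is what legitimizes both the choice of $x$ above and the implication $\qbinom{x}{n-k} \le \qbinom{y}{n-k} \Rightarrow \qbinom{x}{n-k+1} \le \qbinom{y}{n-k+1}$; this follows at once from the product formula $\qbinom{x}{m} = \prod_{i=0}^{m-1}\frac{q^{x-i}-1}{q^{m-i}-1}$, each factor being nonnegative and nondecreasing in $x$ for $x \ge m-1$.
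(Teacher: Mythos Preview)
Your proof is correct and follows essentially the same route as the paper's: your set $W$ coincides with the paper's $T \defeq \{B^\perp : B \in \L{n,k-1}\setminus\partial S\}$, and both arguments apply the primal $q$-Kruskal--Katona theorem to this set to bound its shadow inside $U$. The only cosmetic difference is that you argue directly via the monotonicity of $x\mapsto\qbinom{x}{m}$, whereas the paper phrases the same step contrapositively.
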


\begin{proof}
We will assume $|\partial S| < \qbinom{n}{k-1}-\qbinom{y}{n-k+1}$ and prove that $|S| < \qbinom{n}{k}-\qbinom{y}{n-k}$.
Define $T \subseteq \L{n,n-k+1}$ by
\[
  T \defeq \{B^\perp : B \in \L{n,k-1} \setminus \partial S\}.
\]
Note that $|T| = \qbinom{n}{k-1} - |\partial S| = \qbinom{y}{n-k+1}$. Therefore, Theorem \ref{thm:qKK} implies $|\partial T| > \qbinom{y}{n-k}$.

For all $A \in \L{n,k}$, observe that
\[
  A^\perp \in \partial T 
  \Longleftrightarrow 
  {}\mbox{}&\exists B \in \L{k-1} \setminus \partial S,\ A^\perp \subset B^\perp\\
  \Longleftrightarrow 
  {}\mbox{}&\exists B \in \L{k-1} \setminus \partial S,\ B \subset A\\
  \Longrightarrow 
  {}\mbox{}&A \notin S.
\]
Therefore,
$
  S \subseteq \{A \in \L{n,k} : A^\perp \notin \partial T\}.
$
We conclude that $|S| = \qbinom{n}{k} - |\partial T| < \qbinom{n}{k} - \qbinom{y}{n-k}$, as required.
\end{proof}

\section{Proof of Theorem \ref{thm:density}}\label{sec:main}

The proof of Theorem \ref{thm:density} involves bounding the edge-expansion of sets in the Grassmann graph $J_q(n,k)$. We state the required definitions and lemmas below. (See \cite{subhash2018pseudorandom} for a much deeper study of expansion of Grassman graphs.)

\begin{df}
For a $d$-regular graph $G = (V,E)$ and $S \subseteq V$, the {\em edge-expansion} of $S$ is defined by
\[
  \Phi_G(S) \defeq \frac{|E(S,\BAR S)|}{d|S|}
\]
where $E(S,\BAR S)$ is the set of edges between $S$ and $\BAR S = V \setminus S$.
\end{df}

\begin{la}[Expander Mixing Lemma \cite{alon1988explicit}]\label{la:EML}
Let $G = (V,E)$ be a $d$-regular graph and suppose the second largest eigenvalue (in absolute value) of the adjacency matrix of $G$ is at most $\lambda$. Then for all $S \subseteq V$,
\[
  \left(1-\frac{\lambda}{d}\right)\left(1-\frac{|S|}{|V|}\right)
  \le
  \Phi_G(S) 
  \le 
  \left(1+\frac{\lambda}{d}\right)\left(1-\frac{|S|}{|V|}\right).
\]
\end{la}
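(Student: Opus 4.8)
The plan is to argue directly with the adjacency matrix $A$ of $G$, which is real and symmetric and hence admits an orthonormal eigenbasis. Identify each vertex subset with its indicator vector $\mb{1}_S,\mb{1}_{\BAR S}\in\R^V$. The first observation is purely combinatorial: $|E(S,\BAR S)| = \mb{1}_S^\top A\,\mb{1}_{\BAR S}$, since the sum $\sum_{u\in S,\,v\in\BAR S}A_{uv}$ counts exactly the edges with one endpoint in $S$ and one in $\BAR S$, each once. Now split off the average component: put $\sigma\defeq|S|/|V|$ and write $\mb{1}_S=\sigma\mb{1}+x$ with $\mb{1}$ the all-ones vector and $x\perp\mb{1}$, so that $\mb{1}_{\BAR S}=\mb{1}-\mb{1}_S=(1-\sigma)\mb{1}-x$. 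Since $G$ is $d$-regular we have $A\mb{1}=d\mb{1}$; expanding $(\sigma\mb{1}+x)^\top A\bigl((1-\sigma)\mb{1}-x\bigr)$ and using that $A$ is symmetric (so $\mb{1}^\top Ax=x^\top A\mb{1}=d\,\mb{1}^\top x=0$) together with $\mb{1}^\top A\mb{1}=d|V|$, all terms involving $\mb{1}$ collapse and we obtain
\[
  |E(S,\BAR S)| = \sigma(1-\sigma)\,d|V| - x^\top A x = d|S|\Bigl(1-\tfrac{|S|}{|V|}\Bigr) - x^\top A x.
\]

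It then remains to control the error term $x^\top A x$. Because $x$ is orthogonal to $\mb{1}$, which spans the $d$-eigenspace, $x$ lies in the span of eigenvectors whose eigenvalues have absolute value at most $\lambda$, so $|x^\top A x|\le\lambda\|x\|^2$; this is exactly where the hypothesis on the second-largest eigenvalue (in absolute value) is used. Meanwhile, by orthogonality, $\|x\|^2=\|\mb{1}_S\|^2-\sigma^2\|\mb{1}\|^2=|S|-|S|^2/|V|=|S|\bigl(1-|S|/|V|\bigr)$. Dividing the displayed identity by $d|S|$ (the statement being vacuous when $S=\emptyset$) gives
\[
  \Phi_G(S) = \Bigl(1-\tfrac{|S|}{|V|}\Bigr) - \frac{x^\top A x}{d|S|}
  \qquad\text{with}\qquad
  \Bigl|\frac{x^\top A x}{d|S|}\Bigr| \le \frac{\lambda}{d}\Bigl(1-\tfrac{|S|}{|V|}\Bigr),
\]
which is precisely the asserted two-sided bound $\bigl(1-\tfrac{\lambda}{d}\bigr)\bigl(1-\tfrac{|S|}{|V|}\bigr)\le\Phi_G(S)\le\bigl(1+\tfrac{\lambda}{d}\bigr)\bigl(1-\tfrac{|S|}{|V|}\bigr)$.

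I do not expect a genuine obstacle here: the whole argument is a one-line spectral decomposition plus bookkeeping. The only point deserving care is the estimate $|x^\top A x|\le\lambda\|x\|^2$ for $x\perp\mb{1}$, which relies on $A$ being symmetric and on the precise convention that $\lambda$ bounds \emph{all} eigenvalues other than the Perron eigenvalue $d$ in absolute value (in particular, if $d$ is a repeated eigenvalue then necessarily $\lambda\ge d$, and the bound degenerates gracefully to $0\le\Phi_G(S)\le 2\bigl(1-|S|/|V|\bigr)$, which still holds).
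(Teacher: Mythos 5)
Your proof is correct and complete: the decomposition $\mb{1}_S=\sigma\mb{1}+x$ with $x\perp\mb{1}$, the identity $|E(S,\BAR S)|=d|S|(1-|S|/|V|)-x^\top Ax$, and the bound $|x^\top Ax|\le\lambda\|x\|^2$ together with $\|x\|^2=|S|(1-|S|/|V|)$ give exactly the stated two-sided inequality. The paper does not prove this lemma at all --- it is quoted with a citation to Alon--Chung --- so there is nothing to compare against; your argument is the standard spectral proof, and your remark about the degenerate case where $d$ is a repeated eigenvalue (so that $\lambda\ge d$) correctly closes the one potential loophole in the step $|x^\top Ax|\le\lambda\|x\|^2$.
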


\begin{df}
For $1 \le k \le n$, the {\em Grassmann graph} $J_q(n,k)$ is the $q[k]_q[n-k]_q$-regular graph with vertex set $\L{n,k}$ and edge set
\[
  E_{J_q(n,k)} \defeq 
  \{(A_1,A_2) \in \L{n,k} \times \L{n,k} : \dim(A_1 \cap A_2) = k-1\}.
\]
\end{df}

\begin{la}[Spectrum of $J_q(n,k)$ \cite{brouwer2012distance}]\label{la:spectrum}
The adjacency matrix of $J_q(n,k)$ has eigenvalue
$
  q^{i+1}[k-i]_q[n-k-i]_q-[i]_q
$
with multiplicity $\qbinom{n}{i} - \qbinom{n}{i-1}$ for each $0 \le i \le \min(k,n-k)$. 
In particular, the second largest eigenvalue (in absolute value) equals $1$ if $k \in \{1,n-1\}$ and equals $q^2[k-1]_q[n-k-1]_q-1$ if $2 \le k \le n-2$.
\end{la}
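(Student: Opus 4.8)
\textbf{Proof plan for Lemma \ref{la:spectrum}.}
The plan is to identify $J_q(n,k)$ as a graph in a well-understood association scheme --- namely the $q$-Johnson (or Grassmann) scheme on $\L{n,k}$ --- and read off the eigenvalues from the known eigenvalues of that scheme. Concretely, $\L{n,k}$ together with the relations $R_i = \{(A_1,A_2) : \dim(A_1 \cap A_2) = k-i\}$ for $0 \le i \le \min(k,n-k)$ forms a symmetric association scheme; $J_q(n,k)$ is the graph of the relation $R_1$. The scheme has $\min(k,n-k)+1$ classes, so its adjacency algebra has exactly that many common eigenspaces $V_0,\dots,V_{\min(k,n-k)}$, and $\dim V_i = \qbinom{n}{i} - \qbinom{n}{i-1}$ is the standard multiplicity formula for this scheme (with $V_0$ the all-ones space, dimension $1 = \qbinom{n}{0}$). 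The eigenvalue of the $R_1$-graph on $V_i$ is a value of a $q$-Eberlein (dual $q$-Hahn / affine $q$-Krawtchouk) polynomial; citing \cite{brouwer2012distance} I would quote the closed form $q^{i+1}[k-i]_q[n-k-i]_q - [i]_q$ directly. I would include, as a sanity check rather than a full derivation, that plugging $i=0$ gives $q[k]_q[n-k]_q$, matching the degree, i.e.\ the regularity stated in the definition of $J_q(n,k)$.

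The next step is to verify that these $\min(k,n-k)+1$ numbers are pairwise distinct, so that the listed multiplicities are correct (no eigenspaces merge) and so that ``second largest in absolute value'' is well-defined. Writing $\theta_i = q^{i+1}[k-i]_q[n-k-i]_q - [i]_q$, I would show $\theta_0 > \theta_1 > \cdots$: since $[a]_q$ is increasing in $a$ and $q^{i+1}$ grows by a factor $q$ at each step while $[k-i]_q[n-k-i]_q$ shrinks, one checks $\theta_{i-1} - \theta_i = q^i\bigl(q[k-i+1]_q[n-k-i+1]_q - [k-i]_q[n-k-i]_q\bigr) + \text{(lower order)} > 0$ for all $i$ in range; this is the one genuinely computational point, and it is routine because $q \ge 2$ makes the leading term dominate. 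In particular $\theta_0$ is strictly the largest, consistent with connectedness/regularity.

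Finally I would extract the stated ``in particular'' clause. When $k \in \{1, n-1\}$ we have $\min(k,n-k) = 1$, so there are exactly two eigenvalues, $\theta_0 = q[k]_q[n-k]_q$ (the degree) and $\theta_1 = q^0[k-1]_q[n-k-1]_q - [1]_q$; but $k-1 = 0$ or $n-k-1 = 0$ forces $[k-1]_q[n-k-1]_q = 0$, so $\theta_1 = -1$, hence the second-largest absolute value is $1$. When $2 \le k \le n-2$ we have $\min(k,n-k) \ge 2$, the eigenvalues other than the degree are $\theta_1, \dots, \theta_{\min(k,n-k)}$, and by the monotonicity above $\theta_1 = q^2[k-1]_q[n-k-1]_q - 1$ is the largest of these; I would then note $|\theta_1| \ge |\theta_i|$ for $i \ge 2$ because all of $\theta_1,\dots$ lie in the interval $[-1, \theta_1]$ (each $\theta_i \ge -[i]_q \ge -[\min(k,n-k)]_q$ is dominated in absolute value by $\theta_1$ for $q \ge 2$; a one-line bound $[i]_q \le q^2[k-1]_q[n-k-1]_q - 1$ in the relevant range suffices). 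This gives the claimed value $q^2[k-1]_q[n-k-1]_q - 1$ for the second largest eigenvalue in absolute value.

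The main obstacle is not conceptual --- it is entirely a citation-plus-bookkeeping matter of matching the closed-form eigenvalue expression from \cite{brouwer2012distance} (which may present it via dual $q$-Hahn polynomials or a recurrence) to the stated formula, and of carefully checking the sign/absolute-value comparisons in the two boundary cases $k \in \{1,n-1\}$ versus $2 \le k \le n-2$. No expander or shadow machinery is needed here; this lemma is purely a statement about the spectrum of a distance-regular graph.
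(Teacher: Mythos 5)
Your approach matches the paper's: Lemma \ref{la:spectrum} is stated as a quotation from \cite{brouwer2012distance} with no proof supplied, and the only genuine work is exactly the bookkeeping you describe --- checking that $\theta_0=q[k]_q[n-k]_q$ is the degree, that $\theta_1=-1$ when $k\in\{1,n-1\}$, and that $\theta_0>\theta_1>\cdots$ with $|\theta_i|\le[i]_q\le\theta_1$ for $i\ge 2$ when $2\le k\le n-2$ --- all of which you carry out correctly. Two harmless slips worth fixing: for $i=1$ the prefactor is $q^{2}$, not $q^{0}$ (immaterial there since $[k-1]_q[n-k-1]_q=0$), and the leading term of $\theta_{i-1}-\theta_i$ is $q^i\bigl([k-i+1]_q[n-k-i+1]_q-q[k-i]_q[n-k-i]_q\bigr)$, which is still positive because $[a+1]_q=q[a]_q+1$.
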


Lemmas \ref{la:EML} and \ref{la:spectrum} give the following lower bound on $\Phi_{J_q(n,k)}(S)$.

\begin{la}\label{la:lower}
For all $2 \le k \le n-2$ and 
$S \subseteq \L{n,k}$,
\[
  \Phi_{J_q(n,k)}(S) \ge 
  \frac{[n]_q}{q[k]_q[n-k]_q}
  (1 - \mu_k(S)).
\]
\end{la}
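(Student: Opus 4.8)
The plan is to apply the Expander Mixing Lemma directly, using only its lower-bound half. Take $G = J_q(n,k)$, which by definition is $d$-regular with $d = q[k]_q[n-k]_q$ and has vertex set of size $\qbinom{n}{k}$, so that $|S|/|V| = \mu_k(S)$. Since we are in the range $2 \le k \le n-2$, Lemma \ref{la:spectrum} tells us that the second-largest eigenvalue in absolute value may be taken to be $\lambda \defeq q^2[k-1]_q[n-k-1]_q - 1$; note $\lambda > 0$ because $[k-1]_q \ge 1$ and $[n-k-1]_q \ge 1$ force $q^2[k-1]_q[n-k-1]_q \ge q^2 > 1$. Feeding these into Lemma \ref{la:EML} gives
\[
  \Phi_{J_q(n,k)}(S) \ge \left(1 - \frac{\lambda}{d}\right)(1 - \mu_k(S)),
\]
so it suffices to identify $1 - \lambda/d$ with the claimed constant $[n]_q/(q[k]_q[n-k]_q)$.

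Because $d = q[k]_q[n-k]_q$, that identification is equivalent to the clean identity $d - \lambda = [n]_q$. I would verify this by substituting $[m]_q = (q^m - 1)/(q-1)$ throughout and clearing the common denominator $(q-1)^2$; the claim becomes
\[
  q(q^k - 1)(q^{n-k} - 1) - q^2(q^{k-1} - 1)(q^{n-k-1} - 1) + (q-1)^2 = (q-1)(q^n - 1).
\]
Expanding the two products, the cross terms $q^{k+1}$ and $q^{n-k+1}$ cancel, the remaining terms on the left collapse to $(q-1)(q^n - q)$, and adding $(q-1)^2$ gives $(q-1)(q^n - 1)$. Hence $d - \lambda = [n]_q$ exactly, and substituting back into the displayed inequality yields the lemma.

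There is no genuine obstacle here: once Lemmas \ref{la:EML} and \ref{la:spectrum} are granted, the entire content is the one-line $q$-arithmetic identity above. The only point requiring a moment's care is checking that the hypotheses of Lemma \ref{la:spectrum} are met — that is, that we are genuinely in the regime $2 \le k \le n-2$ where $\lambda = q^2[k-1]_q[n-k-1]_q - 1$ (the boundary cases $k \in \{1, n-1\}$, where the second eigenvalue is $1$, are excluded from the statement of Lemma \ref{la:lower} and will be handled separately in the proof of Theorem \ref{thm:density}). It is worth noting that $d - \lambda$ equals $[n]_q$ on the nose rather than merely being bounded below by it; this exactness is what will ultimately produce the tight first inequality in Theorem \ref{thm:density}.
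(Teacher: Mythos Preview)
Your proof is correct and follows exactly the same approach as the paper: apply the lower bound from the Expander Mixing Lemma with $d = q[k]_q[n-k]_q$ and $\lambda = q^2[k-1]_q[n-k-1]_q - 1$, then verify the algebraic identity $d - \lambda = [n]_q$. Your explicit expansion of this identity is, if anything, more detailed than the paper's ``straightforward calculation.''
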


\begin{proof}
Lemma \ref{la:EML} implies the lower bound
\[
  \Phi_{J_q(n,k)}(S) \ge
  \left(1-\frac{q^2[k-1]_q[n-k-1]_q-1}{[k]_q[n-k]_q}\right)\left(1-\mu_k(S)\right).
\]
By a straightforward calculation,
\[
  1 - \frac{q^2[k-1]_q[n-k-1]_q-1}{q[k]_q[n-k]_q}
  =
  \frac{q^{n+1}-q^n-q+1}{q^{n+1}-q^{k+1}-q^{n-k+1}+q}
  &=
  \frac{[n]_q}{q[k]_q[n-k]_q}.\qedhere
\]
\end{proof}

We next show an upper bound on $\Phi_{J_q(n,k)}(S)$ in terms of the ratio $\mu_k(S)/\mu_{k-1}(\partial S)$.

\begin{la}\label{la:upper}
For all $1 \le k \le n$ and 
$\emptyset \subset S \subseteq \L{n,k}$,
\[
  \Phi_{J_q(n,k)}(S) 
  \le 
  \frac{[n-k+1]_q}{q[n-k]_q}\left(1 - \frac{\mu_k(S)}{\mu_{k-1}(\partial S)}\right).
\]
\end{la}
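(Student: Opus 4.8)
The plan is to bound $|E(S, \BAR S)|$ from below by counting edges through each subspace in the shadow $\partial S$. Fix $B \in \partial S$, a $(k{-}1)$-dimensional subspace. The $k$-dimensional subspaces $A$ with $B \subset A$ are in bijection with the $1$-dimensional subspaces of the quotient $(\F_q)^n / B$, so there are $[n-k+1]_q$ of them; call this set of ``upper neighbors'' $U(B)$. Any two distinct members of $U(B)$ meet in exactly $B$, hence are adjacent in $J_q(n,k)$; so $U(B)$ spans a clique of size $[n-k+1]_q$. Now I would split the edges of this clique according to which endpoints lie in $S$. If $U(B)$ contains $s_B \defeq |U(B) \cap S|$ vertices from $S$, then the number of clique-edges with exactly one endpoint in $S$ is $s_B([n-k+1]_q - s_B)$. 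Since each such edge lies in $E(S, \BAR S)$, and since a given edge $(A_1, A_2)$ of $J_q(n,k)$ arises from at most one common $(k{-}1)$-subspace $B = A_1 \cap A_2$ of this form (and $B \in \partial S$ whenever $A_1 \in S$), summing over $B \in \partial S$ gives
\[
  |E(S,\BAR S)| \ge \sum_{B \in \partial S} s_B\bigl([n-k+1]_q - s_B\bigr).
\]
Actually I must be slightly careful: I only want a lower bound, and the right-hand side could in principle overcount if a single edge is swept twice — but $A_1 \cap A_2$ is a single subspace, so no edge is counted twice across distinct $B$. Thus the inequality stands.

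Next I would massage the right-hand side. Write $N \defeq [n-k+1]_q = |U(B)|$ and note $\sum_{B \in \partial S} s_B$ counts pairs $(B, A)$ with $B \subset A$, $A \in S$, $\dim A = k$; this equals $|S| \cdot [k]_q$ since each $A \in S$ has exactly $[k]_q$ subspaces of codimension one. So $\sum_B s_B = [k]_q |S|$. Then
\[
  \sum_{B \in \partial S} s_B(N - s_B)
  = N[k]_q|S| - \sum_{B \in \partial S} s_B^2.
\]
To upper-bound $\sum s_B^2$ I would apply Cauchy--Schwarz over the $|\partial S|$ terms: $\sum_B s_B^2 \ge (\sum_B s_B)^2/|\partial S| = ([k]_q|S|)^2/|\partial S|$. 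This yields
\[
  |E(S,\BAR S)| \ge N[k]_q|S| - \frac{[k]_q^2|S|^2}{|\partial S|}
  = [k]_q|S|\left(N - \frac{[k]_q|S|}{|\partial S|}\right).
\]
Dividing by $d|S| = q[k]_q[n-k]_q|S|$ gives
\[
  \Phi_{J_q(n,k)}(S) \ge \frac{1}{q[n-k]_q}\left([n-k+1]_q - \frac{[k]_q|S|}{|\partial S|}\right),
\]
which is the wrong direction. So Cauchy--Schwarz as I used it is going the wrong way — I need a \emph{lower} bound on $\sum s_B^2$ to get an \emph{upper} bound on $\Phi$, which is the reverse of what Cauchy--Schwarz naturally provides. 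The resolution is that I should not be lower-bounding $|E(S,\BAR S)|$ at all: for an \emph{upper} bound on $\Phi_{J_q(n,k)}(S)$ I need an \emph{upper} bound on $|E(S,\BAR S)|$, hence a \emph{lower} bound on $\sum_B s_B^2$, and also I must ensure the edge-counting inequality goes the right way — i.e. I should count edges \emph{out of} $S$ and show every such edge is captured.

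So the corrected plan: observe that \emph{every} edge $(A_1, A_2)$ with $A_1 \in S$ has $A_1 \cap A_2 \in \partial S$ (since $A_1 \cap A_2 \subset A_1 \in S$ and $\dim(A_1\cap A_2) = k-1$), and $A_2 \in U(A_1 \cap A_2)$. Hence $|E(S,\BAR S)| = \sum_{B \in \partial S}(\text{edges of the }U(B)\text{-clique from }S\cap U(B)\text{ to }\BAR S)$, where now I must remember that $\BAR S \cap U(B)$ contributes $s_B(N - s_B)$ \emph{within} the clique but $U(B)$-vertices outside $S$ can also be reached by $S$-vertices sitting in \emph{other} cliques through the same top vertex — no: an edge $(A_1,A_2)$ is determined by its vertices, and $A_1 \cap A_2$ is unique, so each edge is counted in exactly one clique $U(A_1\cap A_2)$. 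Therefore
\[
  |E(S,\BAR S)| = \sum_{B \in \partial S} s_B(N - s_B) = N[k]_q|S| - \sum_{B\in\partial S} s_B^2.
\]
Now apply Cauchy--Schwarz the correct way: $\sum_{B \in \partial S} s_B^2 \ge \frac{1}{|\partial S|}\bigl(\sum_B s_B\bigr)^2 = \frac{[k]_q^2 |S|^2}{|\partial S|}$. Substituting,
\[
  |E(S,\BAR S)| \le N[k]_q|S| - \frac{[k]_q^2|S|^2}{|\partial S|} = [k]_q|S|\left([n-k+1]_q - \frac{[k]_q|S|}{|\partial S|}\right).
\]
Dividing by $d|S| = q[k]_q[n-k]_q|S|$:
\[
  \Phi_{J_q(n,k)}(S) \le \frac{[n-k+1]_q - [k]_q|S|/|\partial S|}{q[n-k]_q}
  = \frac{[n-k+1]_q}{q[n-k]_q}\left(1 - \frac{[k]_q|S|/[n-k+1]_q}{|\partial S|}\right).
\]
Finally I would rewrite $[k]_q|S|/[n-k+1]_q$ in terms of densities: since $|\L{n,k}| = \qbinom{n}{k}$, $|\L{n,k-1}| = \qbinom{n}{k-1}$, and $\qbinom{n}{k-1}[k]_q = \qbinom{n}{k}[n-k+1]_q$, we get $\frac{[k]_q|S|}{[n-k+1]_q |\partial S|} = \frac{|S|/\qbinom{n}{k}}{|\partial S|/\qbinom{n}{k-1}} = \frac{\mu_k(S)}{\mu_{k-1}(\partial S)}$, which is exactly the claimed bound.

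The main obstacle — as the false starts above illustrate — is keeping the direction of every inequality consistent: the edge-count must be an \emph{equality} (or a clean upper bound) broken over shadow elements, and Cauchy--Schwarz must be applied so as to \emph{lower}-bound $\sum s_B^2$. The key structural facts that make it work are that the upper-neighbors of any $B \in \L{n,k-1}$ form a clique of size exactly $[n-k+1]_q$ in $J_q(n,k)$, that distinct cliques of this type share no edge, and the $q$-binomial identity $\qbinom{n}{k-1}[k]_q = \qbinom{n}{k}[n-k+1]_q$ converting the combinatorial ratio into the density ratio. No eigenvalue input is needed for this half; Lemma \ref{la:spectrum} enters only in Lemma \ref{la:lower}, and the two bounds will be combined afterward to prove Theorem \ref{thm:density}.
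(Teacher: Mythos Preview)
Your proposal is correct and follows essentially the same route as the paper: decompose the cut edges of $J_q(n,k)$ across the cliques $U(B)$ indexed by $B\in\partial S$, obtain the exact identity $|E(S,\BAR S)|=\sum_{B\in\partial S}s_B([n-k+1]_q-s_B)$, apply Cauchy--Schwarz to lower-bound $\sum_B s_B^2$ (using $\sum_B s_B=[k]_q|S|$), and then convert $[k]_q|S|\big/\big([n-k+1]_q|\partial S|\big)$ to $\mu_k(S)/\mu_{k-1}(\partial S)$ via the identity $\qbinom{n}{k-1}[k]_q=\qbinom{n}{k}[n-k+1]_q$. After your self-corrections on the inequality directions, the argument matches the paper's proof line for line.
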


\begin{proof}
For $B \in \partial S$, let 
$S_B \defeq \{A \in S : B \subset A\}.$
We have $\sum_{B \in \partial S}|S_B| = [k]_q|S|$ and, by the Cauchy-Schwarz inequality,
\[
  \sum_{B \in \partial S} |S_B|^2
  \ge
  \frac{(\sum_{B \in \partial S}|S_B|)^2}{|\partial S|}
  &=
  \frac{([k]_q|S|)^2}{|\partial S|}.
\]
Therefore,
\[
  |E_{J_q(n,k)}(S,\BAR S)| 
  =
  \sum_{B \in \partial S} |S_B \times \BAR{S}_B|
  &=
  \sum_{B \in \partial S} |S_B|\left([n-k+1]_q - |S_B|\right)\\
  &\le
  [k]_q|S|\left([n-k+1]_q - \frac{[k]_q|S|}{|\partial S|}\right).
\]
We now have
\[
  \Phi_{J_q(n,k)}(S) 
  =
  \frac{|E_{J_q(n,k)}(S,\BAR S)|}{q[k]_q[n-k]_q|S|}
  &\le
  \frac{[n-k+1]_q}{q[n-k]_q} - 
  \frac{[k]_q}{q[n-k]_q}\cdot
  \frac{|S|}{|\partial S|}.
\]
The lemma now follows from the equality
\[
  \frac{[k]_q}{q[n-k]_q}\cdot
  \frac{|S|}{|\partial S|}
  =
  \frac{[k]_q\qbinom{n}{k}}{q[n-k]_q\qbinom{n}{k-1}}\cdot
  \frac{\mu_k(S)}{\mu_{k-1}(\partial S)}
  &=
  \frac{[n-k+1]_q}{q[n-k]_q}\cdot
  \frac{\mu_k(S)}{\mu_{k-1}(\partial S)}.\qedhere
\]
\end{proof}

We are ready to prove:

\begin{mainthm*}
For all $1 \le k \le n$ and $S \subseteq \L{n,k}$, if $\mu_k(S) = (1+z)^{-1}$ where $z \in \R_{\ge 0}$, then
\[
  \mu_{k-1}(\partial S) \ge \left(1+
  \frac{q(q^{k-1}-1)(q^{n-k}-1)}
          {(q^k-1)(q^{n-k+1}-1)}
  \cdot z\right)^{-1} \ge \left(1+\frac{z}{q}\right)^{-1}.
\]
\end{mainthm*}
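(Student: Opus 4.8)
The plan is to combine Lemmas~\ref{la:lower} and~\ref{la:upper} to squeeze $\Phi_{J_q(n,k)}(S)$ from both sides, handle the boundary cases $k \in \{1,n-1\}$ separately, and then simplify the resulting inequality into the stated form. Assume first that $2 \le k \le n-2$ and that $\emptyset \subsetneq S \subsetneq \L{n,k}$ (the cases $S = \emptyset$ and $S = \L{n,k}$ are trivial, giving $z = \infty$ and $z = 0$ respectively). Write $\mu_k = \mu_k(S) = (1+z)^{-1}$ and $\nu = \mu_{k-1}(\partial S)$. Chaining the two lemmas gives
\[
  \frac{[n]_q}{q[k]_q[n-k]_q}(1-\mu_k)
  \le
  \Phi_{J_q(n,k)}(S)
  \le
  \frac{[n-k+1]_q}{q[n-k]_q}\Bigl(1 - \frac{\mu_k}{\nu}\Bigr).
\]
Cancelling the common factor $1/(q[n-k]_q)$ and rearranging, this says
\[
  \frac{\mu_k}{\nu}
  \le
  1 - \frac{[n]_q}{[k]_q\,[n-k+1]_q}(1-\mu_k),
\]
so $\nu \ge \mu_k \bigl(1 - \tfrac{[n]_q}{[k]_q[n-k+1]_q}(1-\mu_k)\bigr)^{-1}$, provided the quantity in parentheses is positive (if it is $\le 0$ the bound $\nu \le 1$ already suffices, but one should check this does not occur in the relevant range — in fact $[n]_q \le [k]_q[n-k+1]_q$ always holds for $1 \le k \le n$, with equality only at the extremes, which also covers the boundary cases below after re-deriving the analogous inequality).

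Next I would substitute $\mu_k = (1+z)^{-1}$, so $1 - \mu_k = z/(1+z)$, into the bound for $\nu$ and invert. A direct computation gives
\[
  \frac{1}{\nu}
  \le
  (1+z)\Bigl(1 - \frac{[n]_q}{[k]_q[n-k+1]_q}\cdot\frac{z}{1+z}\Bigr)
  =
  1 + \Bigl(1 - \frac{[n]_q}{[k]_q[n-k+1]_q}\Bigr) z,
\]
so $\nu \ge \bigl(1 + c_{n,k}\, z\bigr)^{-1}$ where $c_{n,k} = 1 - \tfrac{[n]_q}{[k]_q[n-k+1]_q}$. The remaining task is the algebraic identity
\[
  1 - \frac{[n]_q}{[k]_q\,[n-k+1]_q}
  =
  \frac{q\,(q^{k-1}-1)(q^{n-k}-1)}{(q^k-1)(q^{n-k+1}-1)},
\]
which follows by clearing denominators using $[m]_q = (q^m-1)/(q-1)$: both sides, multiplied through by $(q^k-1)(q^{n-k+1}-1)$, reduce to the polynomial identity $(q^k-1)(q^{n-k+1}-1) - (q-1)^{-1}(q-1)(q^n-1) \cdot(q-1)/(q-1)$ — more cleanly, $(q^k-1)(q^{n-k+1}-1) - (q^n-1) = q^{n+1} - q^k - q^{n-k+1} + 1 - q^n + 1$, wait, one expands $(q^k-1)(q^{n-k+1}-1) = q^{n+1} - q^k - q^{n-k+1} + 1$ and subtracts $q^n - 1$... actually $[n]_q$ here should be multiplied by $(q-1)$ consistently; the bookkeeping is routine and already appears in the proof of Lemma~\ref{la:lower}, so I would just cite that calculation. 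Finally, the second inequality $c_{n,k} \le 1/q$, i.e. $q(q^{k-1}-1)(q^{n-k}-1) \le (1/q)(q^k-1)(q^{n-k+1}-1)$, rearranges to $q^2(q^{k-1}-1)(q^{n-k}-1) \le (q^k-1)(q^{n-k+1}-1)$; writing $q^k - 1 = q(q^{k-1}-1) + (q-1)$ and similarly for the other factor, the right side expands to $q^2(q^{k-1}-1)(q^{n-k}-1) + q(q-1)(q^{k-1}-1) + q(q-1)(q^{n-k}-1) + (q-1)^2$, which visibly dominates the left side. Since $z \mapsto (1+cz)^{-1}$ is decreasing in $c$ for $z \ge 0$, this yields $\nu \ge (1+z/q)^{-1}$.

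For the boundary cases $k = 1$ and $k = n-1$, Lemma~\ref{la:spectrum} gives second eigenvalue exactly $1$, so Lemma~\ref{la:EML} still applies with $\lambda = 1$ and the argument of Lemma~\ref{la:lower} goes through verbatim with $\lambda/d = 1/(q[k]_q[n-k]_q)$ in place of the $2 \le k \le n-2$ value; one checks $1 - 1/(q[k]_q[n-k]_q) = [n]_q/(q[k]_q[n-k]_q) \cdot \text{(correction)}$ — actually for $k=1$, $[k]_q = 1$ and $d = q[n-1]_q$, and $1 - 1/(q[n-1]_q) = (q[n-1]_q - 1)/(q[n-1]_q) = [n]_q/(q[n-1]_q)$ since $q[n-1]_q - 1 = q(q^{n-1}-1)/(q-1) - 1 = (q^n - q - q + 1)/(q-1)$, hmm, $= (q^n - 2q + 1)/(q-1)$, which is not $[n]_q$. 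So the boundary cases need the slightly weaker bound coming from $\lambda = 1$ directly, and I expect this is exactly where the general formula $c_{n,k} = 1 - [n]_q/([k]_q[n-k+1]_q)$ still must be verified to hold (it does, since Lemma~\ref{la:upper} holds for all $1 \le k \le n$, and for $k=1$ the lower bound $\Phi \ge (1 - 1/(q[n-1]_q))(1-\mu_1)$ combined with Lemma~\ref{la:upper}'s $\Phi \le ([n]_q/(q[n-1]_q))(1 - \mu_1/\nu)$ gives the same final inequality after checking $[n]_q/(q[n-1]_q) - (1 - 1/(q[n-1]_q)) \cdot (\text{stuff})$ matches). The main obstacle, then, is not any single deep step but rather making sure the eigenvalue bound is handled uniformly across \emph{all} $1 \le k \le n$ — in particular confirming that the clean identity for $c_{n,k}$ emerging from the $2 \le k \le n-2$ spectrum also governs the $k \in \{1, n-1\}$ endpoints, and that the positivity condition $[n]_q \le [k]_q[n-k+1]_q$ never fails so that the inversion step is legitimate.
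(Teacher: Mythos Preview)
Your argument for the main range $2 \le k \le n-2$ is correct and matches the paper exactly: sandwich $\Phi_{J_q(n,k)}(S)$ between Lemmas~\ref{la:lower} and~\ref{la:upper}, then rearrange. The algebraic identity for $c_{n,k}$ and the inequality $c_{n,k} \le 1/q$ are also fine (the paper does these too, without much more detail than you give).

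The genuine gap is the case $k = n-1$. Your plan there is to feed $\lambda = 1$ directly into the Expander Mixing Lemma, but this does \emph{not} reproduce the constant in Lemma~\ref{la:lower}. Concretely, for $k = n-1$ one has $d = q[n-1]_q$ and the EML lower bound is
\[
  \Phi_{J_q(n,n-1)}(S) \ge \Bigl(1 - \tfrac{1}{q[n-1]_q}\Bigr)(1 - \mu_{n-1}(S)),
\]
whereas Lemma~\ref{la:lower} would need the coefficient $\tfrac{[n]_q}{q[n-1]_q}$. These differ: $q[n-1]_q - 1 = [n]_q - 2 < [n]_q$, so the EML bound is strictly weaker. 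Carrying this through your rearrangement yields a coefficient $c'_{n,n-1}$ strictly larger than $c_{n,n-1}$ (for instance, $q=2$, $n=3$ gives $c' = 4/9$ versus the required $c_{3,2} = 2/9$), and the first inequality of the theorem fails to follow. You noticed something was off in the $k=1$ calculation but then hand-waved that the same final inequality emerges; it does not.

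The paper handles the endpoints differently. For $k \in \{1,n\}$ the coefficient $c_{n,k}$ vanishes (since $q^{k-1}-1 = 0$ or $q^{n-k}-1 = 0$), so the claim reduces to $\mu_{k-1}(\partial S) \ge 1$, which is immediate because any nonempty $S$ has $\partial S$ equal to all of $\L{n,k-1}$ in those cases. For $k = n-1$ the paper instead uses \emph{duality}: pass to $T = \{B^\perp : B \in \L{n,n-2} \setminus \partial S\} \subseteq \L{n,2}$, apply the already-proved case $k=2$ to $T$, and translate back. This is the missing idea in your proposal.
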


\begin{proof}
The second inequality is by a straightforward calculation:
\[
  \frac{q(q^{k-1}-1)(q^{n-k}-1)}
       {(q^k-1)(q^{n-k+1}-1)}
  &= \frac{1}{q}\left(1 - 
     \frac{(q-1)(q^{n-k+1}+q^k-q-1)}
          {(q^k-1)(q^{n-k+1}-1)}
          \right)
  \le \frac{1}{q}.
\]

For the first inequality, consider the case that $k \in \{1,n\}$. In both cases, we have $\mu_{k-1}(\partial S) = 1$ for every nonempty $S \subseteq \L{n,k}$. Therefore, the inequality holds (moreover, with equality since $[k-1]_q[n-k]_q = 0$).

Next, consider the case that $2 \le k \le n-2$. In this case, Lemmas \ref{la:lower} and \ref{la:upper} imply
\[
  \frac{[n]_q}{q[k]_q[n-k]_q}(1 - \mu_k(S))
  \le
  \Phi_{J_q(n,k)}(S) 
  \le
  \frac{[n-k+1]_q}{q[n-k]_q}\left(1 - \frac{\mu_k(S)}{\mu_{k-1}(\partial S)}\right).
\]
Therefore,
\[
  \frac{[n]_q}{[k]_q [n-k+1]_q}(1 - \mu_k(S))
  \le
  1 - \frac{\mu_k(S)}{\mu_{k-1}(\partial S)}.
\]
Substituting $(1+z)^{-1}$ for $\mu_k(S)$, this rearranges to
\[
  \mu_{k-1}(\partial S)
  \ge
  \left(
  1 + \left(1-\frac{[n]_q}{[k]_q[n-k+1]_q}\right)z
  \right)^{-1}
  =
  \left(
  1 + \frac{q(q^{k-1}-1)(q^{n-k}-1)}{(q^k-1)(q^{n-k+1}-1)}\cdot z
  \right)^{-1}.
\]

We derive the remaining case $k = n-1$ from the case $k=2$ via duality. Letting $S \subseteq \L{n,n-1}$, we will assume that
\[
  \mu_{n-2}(S) < \left(1+\frac{q(q^{n-2}-1)(q-1)}{(q^2-1)(q^{n-1}-1)}\cdot z\right)^{-1}
\]
and show that $\mu_{n-1}(S) < (1+z)^{-1}$.
Let $T \defeq \{B^\perp : B \in \L{n,n-2} \setminus \partial S\}$ and note that
\[
  \mu_2(T) = 1 - \mu_{n-2}(\partial S) 
  > 1 - \left(1+\frac{q(q^{n-2}-1)(q-1)}{(q^2-1)(q^{n-1}-1)} \cdot z\right)^{-1}
  =
  \left(1+\frac{(q^2-1)(q^{n-1}-1)}{z\cdot q(q^{n-2}-1)(q-1)}\right)^{-1}.
\]
From the case $k=2$, we have
\[
  \mu_1(\partial T) 
  \ge
  \left(1+\frac{q(q^{n-2}-1)(q-1)}{(q^2-1)(q^{n-1}-1)} 
  \cdot
  (\mu_2(T)^{-1}-1)
  \right)^{-1}
  > 
  \left(1+\frac{1}{z}\right)^{-1}.
\]
Since $S \subseteq \{A \in \L{n,n-1} : A^\perp \notin \partial T\}$ (as in the proof of Theorem \ref{thm:dual-qKK}), it follows that
\[
  \mu_{n-1}(S) \le 
  1 - \mu_1(\partial T) <
  1 - \left(1+\frac{1}{z}\right)^{-1} = (1+z)^{-1},
\]
as required.
\end{proof}

We remark that Theorem \ref{thm:density} is self-dual: for any $1 \le k \le n$ and $S \subseteq \L{n,k}$, we get the same inequality between $\mu_k(S)$ and $\mu_{k-1}(\partial S)$ as between $1 - \mu_{n-k}(\partial T)$ and $1 - \mu_{n-k+1}(T)$  where $T \defeq \{B^\perp : B \in \L{n,k-1} \setminus \partial S\}$.

\section{Tightness of the result}\label{sec:tightness} 

Fix a flag $V_0 \subset V_1 \subset \dots \subset V_n = (\F_q)^n$. (Without loss of generality, we may take $V_k = \{u \in (\F_q)^n : u_{k+1}=\dots=u_n=0\}$.)
For $1 \le j \le n$, let $Q_{\wh j}$ be the ideal
\[
  Q_{\wh j} \defeq 
  \{A \in \L{n} : A \cap 
  (V_j - V_{j-1})
  = \emptyset\}.
\]
In particular, $Q_{\wh 1}$ is the set of subspaces of $(\F_q)^n$ that do not contain $V_1$, while $Q_{\wh n}$ is the set of subspaces contained in $V_{n-1}$. It can be shown (by some tedious calculations) that
\begin{enumerate}[\quad(i)\ ]
\item
$\ds\mu_{n-j}(Q_{\wh j}) > 
1/2 > \mu_{n-j+1}(Q_{\wh j})$,
\item
if $2 \le k \le n-1$ and $\mu_k(Q_{\wh j}) = (1+z)^{-1}$, then 
$\ds\left(1+\frac{z}{q^2}\right)^{-1} \ge \mu_{k-1}(Q_{\wh j}) \ge \left(1+\frac{z}{q}\right)^{-1}$,
\item
$\ds\mu_k(Q_{\wh n}) 
  = 
  \frac{\qbinom{n-1}{k}}{\qbinom{n}{k}} 
  = 
  \frac{[n-k]_q}{[n]_q}
  = 
  \left(1 + 
  \frac{q^{n-k}(q^k-1)}{(q^{n-k}-1)}
  \right)^{-1}$,
\item
$\ds\mu_k(Q_{\wh 1}) 
  = 
  1 - \frac{\qbinom{n-1}{k-1}}{\qbinom{n}{k}} 
  = 
  1 - 
  \frac{[k]_q}{[n]_q}
  =
  \left(1 + 
  \frac{(q^k-1)}{q^k(q^{n-k}-1)}
  \right)^{-1}$.
\end{enumerate}

Inequalities (i) and (ii) show that Theorem \ref{thm:main} is essentially tight, no matter where in $\{1,\dots,n\}$ the threshold for $Q$ occurs. Equations (iii) and (iv) show that the first inequality of Theorem \ref{thm:density} is tight
\begin{itemize}
\item
when $S$ is the set of $k$-dimensional subspaces of a fixed $n{-}1$-dimensional space, as well as 
\item
when $S$ is the set of $k$-dimensional subspaces not containing a fixed $1$-dimensional space. 
\end{itemize}
The first example is also tight for $q$-Kruskal-Katona (Theorem \ref{thm:qKK}), while the second example is tight for the Dual $q$-Kruskal-Katona (Theorem \ref{thm:dual-qKK}).
Taking the maximum of the bounds given by Theorem \ref{thm:density}, \ref{thm:qKK} and \ref{thm:dual-qKK}, we get:

\begin{cor}\label{cor:combined}
For all $1 \le k \le n$ and $\emptyset \subset S \subset \L{n,k}$,
\[
  |\partial S| \ge
  \left\{
  \begin{array}{lllrl}
    \ds\qbinom{x}{k-1}
      &\ds\text{if } |S|=\qbinom{x}{k} 
      \text{ where } k \le x \le n-1,\\
   \vphantom{{}^{\Big|}{j}_{\Big|}}\ds\qbinom{n}{k-1}\left(1 + 
      \frac{z \cdot (q^{k-1}-1)}{q^{k-1}(q^{n-k+1}-1)}\right)^{-1}\!\!
      &\ds\text{if } |S|=\qbinom{n}{k}\left(1+\frac{z \cdot (q^k-1)}{q^k(q^{n-k}-1)}\right)^{-1}\!
      \text{ where } 
      1 \le z \le q^n,\\
    \ds\qbinom{n}{k-1} - \qbinom{y}{n-k+1} 
      &\ds\text{if } |S|=\qbinom{n}{k}-\qbinom{y}{n-k}
      \text{ where } 
      n-k+1 \le y \le n-1.
  \end{array}
  \right.
\]
\end{cor}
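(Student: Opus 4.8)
The plan is to read the corollary as an ``envelope'' of the three shadow bounds already in hand, and to verify each of its three branches in isolation; since the asserted lower bound on $|\partial S|$ in each case is exactly the conclusion of one of Theorems \ref{thm:qKK}, \ref{thm:density}, \ref{thm:dual-qKK}, taking the maximum over whichever branches apply is automatic. One should also note that the stated parameter ranges are arranged precisely so that the hypothesis $\emptyset \subset S \subset \L{n,k}$ forces $0 < |S| < \qbinom{n}{k}$ in each regime (using $\qbinom{n}{n-k} = \qbinom{n}{k}$ for the dual branch), so no degenerate cases arise.

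For the first branch, the hypothesis $|S| = \qbinom{x}{k}$ with $k \le x \le n-1$ is exactly the hypothesis of Theorem \ref{thm:qKK}, which yields $|\partial S| \ge \qbinom{x}{k-1}$ with nothing further to do; likewise the third branch, $|S| = \qbinom{n}{k} - \qbinom{y}{n-k}$ with $n-k+1 \le y \le n-1$, is the hypothesis of Theorem \ref{thm:dual-qKK}, giving $|\partial S| \ge \qbinom{n}{k-1} - \qbinom{y}{n-k+1}$ directly. The only branch needing computation is the middle one, which I would handle by passing to densities: the assumption on $|S|$ says exactly that $\mu_k(S) = (1 + z')^{-1}$ with $z' = \tfrac{z(q^k-1)}{q^k(q^{n-k}-1)}$, so Theorem \ref{thm:density} gives $\mu_{k-1}(\partial S) \ge (1 + C z')^{-1}$ with $C = \tfrac{q(q^{k-1}-1)(q^{n-k}-1)}{(q^k-1)(q^{n-k+1}-1)}$. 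Forming $C z'$, the factors $(q^k-1)$ and $(q^{n-k}-1)$ cancel and one is left with $C z' = \tfrac{z(q^{k-1}-1)}{q^{k-1}(q^{n-k+1}-1)}$; multiplying $\mu_{k-1}(\partial S)$ through by $\qbinom{n}{k-1}$ then gives the stated bound on $|\partial S|$.

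There is essentially no obstacle here beyond this one-line simplification; the substantive content of the corollary is conceptual rather than computational, namely that the three ranges $k \le x \le n-1$, $1 \le z \le q^n$, and $n-k+1 \le y \le n-1$ are exactly the regimes where the corresponding bound is the strongest of the three. This is underpinned by the tightness discussion of \S\ref{sec:tightness}: by equations (iv) and (iii) the endpoints $z = 1$ and $z = q^n$ of the middle range correspond to the extremal ideals $Q_{\wh 1}$ and $Q_{\wh n}$, at which $q$-Kruskal--Katona and its dual are respectively tight, and between those endpoints Theorem \ref{thm:density} dominates both. (The middle implication in fact holds for every $z \in \R_{\ge 0}$, since Theorem \ref{thm:density} does; the range is stated only to mark where it wins.)
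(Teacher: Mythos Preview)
Your proof is correct and matches the paper's approach exactly: the paper states the corollary simply as the result of ``taking the maximum of the bounds given by Theorem \ref{thm:density}, \ref{thm:qKK} and \ref{thm:dual-qKK}'', and you have supplied the one nontrivial verification (the change of variables showing the middle branch is Theorem \ref{thm:density} with $z' = z(q^k-1)/(q^k(q^{n-k}-1))$). One small slip in your closing remark: the correspondence is reversed---$Q_{\wh 1}$ (at $z=1$) is the tight case for the \emph{dual} $q$-Kruskal--Katona, while $Q_{\wh n}$ (at $z=q^n$) is tight for $q$-Kruskal--Katona itself, consistent with the paper's note that $z \in \{1,q^n\}$ coincides with $y=n-1$ and $x=n-1$ respectively.
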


Corollary \ref{cor:combined} is known to be tight when $x$ or $y$ are integers (or $z \in \{1,q^n\}$, coinciding with cases $y=n-1$ and $x=n-1$).
In other cases, determining the optimal lower bound for $|\partial S|$ in terms of $|S|$ remains an open problem. 
In contrast, note that the original Kruskal-Katona Theorem \cite{katona2009theorem,kruskal1963number} completely solves the shadow minimization problem in the boolean lattice: if $S$ is a family of $k$-element sets and $|S| = \binom{n_k}{k} + \binom{n_{k-1}}{k-1} + \dots + \binom{n_j}{j}$ where $n_k > n_{k-1} > \dots > n_j = j \ge 1$, then $|\partial S| \ge \binom{n_k}{k-1}+\binom{n_{k-1}}{k-2}+\dots + \binom{n_j}{j-1}$ and this bound is tight. Moreover, a family of nested solutions is given by the subsets of $\{1,\dots,n\}$ in co-lexicographic order. The situation in $\L{n}$ appears more complicated, as nested solutions to the shadow minimization problem in $\L{n}$ are known not to exist \cite{bezrukov1999kruskal,harper1994isoperimetric,ure1996study}.

\section{Application to a query problem}\label{sec:query}

We conclude by giving an application of Theorem \ref{thm:main} to a problem in query complexity. In this problem, $A$ is a {\em hidden} nontrivial subspace of $(\F_2)^n$ and the goal is to learn a nonzero element of $A$ with probability $\ge 1/2$ by making $m$ simultaneous (non-adaptive) monotone queries. What is the minimum $m$ for which this is possible?  An upper bound of $O(n^2)$ is well-known (see \cite{kawachi2017query}). The following theorem gives a matching lower bound of $\Omega(n^2)$.

\begin{thm}\label{thm:query}
Let $(\Q_1,\dots,\Q_m)$ be a joint distribution over ideals in the subspace lattice of $(\F_2)^n$ and let $f$ be a function $\{0,1\}^m \to (\F_2)^n \setminus \{\vec 0\}$. Suppose that for every nontrivial subspace $A$ of $(\F_2)^n$, it holds that
\[
  \Pr_{\Q_1,\dots,\Q_m}[\ f(1_{\{A \in \Q_1\}},\dots,1_{\{A \in \Q_m\}}) \in A\ ] \ge 1/2
\]
where $1_{\{A \in \Q_i\}}$ is the indicator function for the event that $A \in \Q_i$. Then $m = \Omega(n^2)$.
\end{thm}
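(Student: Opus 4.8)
The plan is to reduce the question to counting, via Theorem~\ref{thm:main}, how many ideals are needed so that the Boolean vector of membership indicators determines a nonzero vector of the hidden subspace $A$. The key observation is that each query $\Q_i$ partitions the nontrivial subspaces into two classes ($A \in \Q_i$ or not), and since $\Q_i$ is an ideal, this partition has strong structure: by the sharp threshold theorem for $\F_2$ (so $q = 2$), for each ideal $\Q$ the density $\mu_k(\Q)$ drops from $\ge 1-\eps$ to $\le \eps$ within an interval of only $O(\log(1/\eps))$ dimensions around its threshold $t(\Q)$. I would first fix $\eps$ to be a suitably small constant (say $\eps = 1/100m$ or similar, to be calibrated at the end) and invoke Theorem~\ref{thm:main} to say that for each $i$, there is an interval $I_i \subseteq \{1,\dots,n\}$ of length $L = O(\log m)$ such that $\mu_k(\Q_i) \notin (\eps, 1-\eps)$ for all $k \notin I_i$.

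Next I would argue that, restricted to subspaces of a single dimension $k$ lying outside every $I_i$, almost all indicator bits $1_{\{A \in \Q_i\}}$ are "predictable" — each equals its majority value with probability $\ge 1 - \eps$ over a uniformly random $k$-dimensional $A$ (conditioning appropriately on the joint distribution; here I'd want to be a little careful and apply the bound to each $\Q_i$ marginally, then union bound). Thus, with positive probability over both $A \sim \L{n,k}$ and $(\Q_1,\dots,\Q_m)$, the entire indicator vector equals a fixed string $b \in \{0,1\}^m$, forcing $f$ to output a single fixed vector $v = f(b)$. But the number of dimensions $k$ covered by $\bigcup_i I_i$ is at most $mL = O(m \log m)$, so if $m\log m = o(n)$ there is a dimension $k$ with $1 \le k \le n-1$ that avoids all $I_i$; for such $k$, a uniformly random $k$-dimensional subspace $A$ contains any fixed nonzero $v$ with probability $\frac{[k]_q - 1}{[n]_q - 1} \cdot (\text{something}) = 2^{-(n-k)}(1+o(1))$ — in particular, this probability is at most $1/2$ whenever $k \le n-1$, and much smaller for $k$ bounded away from $n$. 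Combining, the success probability is at most roughly $\eps + 2^{-(n-k)} < 1/2$, a contradiction. This already gives $m = \Omega(n/\log n)$; to get the claimed $\Omega(n^2)$ I need a better counting argument.

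To push from $n/\log n$ to $n^2$, the refinement is that we cannot merely demand one good dimension $k$: the function $f$ must succeed for \emph{every} nontrivial $A$, across all $n-1$ relevant dimensions simultaneously, and within each dimension the output $v = f(b)$ must land inside $A$. The right bookkeeping is: for each dimension $k$ outside $\bigcup_i I_i$, the hidden subspace is pinned (with high probability) to making $f$ output one fixed $v_k$, which must lie in a $1-o(1)$ fraction of $k$-dimensional subspaces; but a fixed nonzero vector lies in only a $\approx 2^{k-n}$ fraction of $k$-subspaces, so this can only be consistent if essentially \emph{every} dimension $k$ from $1$ up to nearly $n$ is \emph{inside} some $I_i$ — forcing the intervals $I_i$ to cover all but $O(1)$ of $\{1,\dots,n\}$, not just to exist. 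Since each interval has length $L = O(\log(1/\eps))$ and we need $\eps$ small enough that the per-query error $\eps$ survives a union bound over the $\Omega(n)$ relevant dimensions, we are forced to take $\eps = 1/n^{\Theta(1)}$, hence $L = \Theta(\log n)$, and then covering $\{1,\dots,n\}$ needs $m \ge n/L = \Omega(n/\log n)$ — still not quadratic. The genuinely quadratic bound must come from iterating the threshold argument on \emph{restrictions} of the subspace lattice (fixing a sub-subspace and looking at the ideal induced on the quotient), so that a single query of "width" $L$ in the full lattice only has width $L$ in each of $\sim n$ nested sub-lattices but cannot be width-$L$ in all of them at once; the hard part is setting up this recursion cleanly. \textbf{The main obstacle} is precisely this last step: extracting $\Omega(n^2)$ rather than $\Omega(n/\log n)$ requires either a self-reduction across nested sub-lattices or a direct adversary/information-theoretic argument showing each query reveals only $O(\log n / n)$ bits on average about a nonzero vector of $A$, and making that averaging rigorous against an adaptive-looking (but formally non-adaptive) choice of $f$ is where the real work lies.
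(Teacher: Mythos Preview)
Your use of the sharp threshold theorem (Theorem~\ref{thm:main}) is the right entry point and matches what the paper does. But note that the paper does not give a self-contained proof of Theorem~\ref{thm:query}: it simply states that the bound follows by combining Theorem~\ref{thm:main} with lemmas from \cite{kawachi2017query}, where the $\Omega(n^2)$ lower bound is already established in the special case that each $\Q_i$ has the form $\{A : A \cap U_i = \emptyset\}$ for some $U_i \subseteq (\F_2)^n$. The role of Theorem~\ref{thm:main} is to supply, for \emph{arbitrary} ideals, the sharp-threshold behaviour that \cite{kawachi2017query} could previously only invoke for that particular family; the quadratic bound itself comes from the combinatorial machinery of that paper, not from the threshold statement alone.

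Your proposal has a genuine gap exactly where you flag it. The interval-covering argument is intrinsically linear in $m$: $m$ ideals have threshold windows of total length $O(m\log(1/\eps))$, so forcing them to cover $\{1,\dots,n\}$ yields only $m = \Omega(n/\log n)$ (or $\Omega(n)$ with constant $\eps$), and no choice of $\eps$ pushes this to $n^2$. A secondary issue is that the $\Q_i$ are random, so the threshold intervals $I_i$ are random as well; you need to derandomize first (e.g.\ via Yao's principle against a suitable input distribution on $A$) before speaking of fixed intervals---you gesture at ``conditioning appropriately'' but do not carry this out. The speculative routes you sketch toward $\Omega(n^2)$---a recursion over nested sub-lattices, or an information-theoretic bound of $O((\log n)/n)$ bits per query---are not developed, and the paper does not pursue either of them; it instead imports the needed lemmas from \cite{kawachi2017query} as a black box.
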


This lower bound follows from combining Theorem \ref{thm:main} with some lemmas from the paper \cite{kawachi2017query}, which proves the special case of Theorem \ref{thm:query} where ideals $Q_i$ are restricted to be of the form $\{A \in L_2(n) : A \cap U_i = \emptyset\}$ for subsets $U_i \subseteq (\F_2)^n$.

\bibliographystyle{plain}
\bibliography{thresholds.bib}

\end{document}